\newtheorem{thm}{Theorem}[section]
\newtheorem{lem}[thm]{Lemma}
\newtheorem{rem}[thm]{Remark}
\theoremstyle{definition}
\numberwithin{equation}{section}
\newcommand{\ann}{\operatorname{Ann}}
\newcommand\supp {\operatorname{Supp}}
\newcommand\ass {\operatorname{Ass}}
\newcommand\fp{\mathfrak p}
\newcommand\fq{\mathfrak q}
\newcommand\N{\mathbb N}
\begin{document}

\title[characterization  of the  weakly Laskerian (FSF) modules]{A new characterization  of the  weakly Laskerian (FSF) modules}%
\author[A.~fathi]{Ali Fathi}
\address{Department of Mathematics, Zanjan Branch,
Islamic Azad University,  Zanjan, Iran.}
\email{alif1387@gmail.com}

\keywords{FSF module, weakly Laskerian module}
\subjclass[2010]{13C05, 13E99}
%\date{*}%
%\dedicatory{}%
%\commby{*}%
% ----------------------------------------------------------------
\begin{abstract} Let $R$ be a commutative Noetherian ring and $M$ be an $R$-module such that the set of associated prime ideals of the quotient module $M/L$ is finite for all  submodules $L$ of $M$. In this paper, it is shown that there is a finitely generated submodule $N$ of $M$ such that the set of associated primes of $M/N$ and  the support of $M/N$ are equal.
 \end{abstract}
\maketitle
\setcounter{section}{0}
% ----------------------------------------------------------------
\section{\bf Introduction}

Throughout this paper, $R$ is a commutative Noetherian ring with nonzero identity.
 Let $N$ be a proper submodule of an $R$-module $M$. Then $N$ is called a {\it primary submodule} of $M$ when for all $r\in R$ and $x\in M$ if $rx\in N$, then $x\in N$ or $r^nM\subseteq N$ for some $n\in\N$. If $N$ is a primary submodule of $M$, then $\fp:=\surd(\ann_R(M/N))$
is a prime ideal of $R$ and $N$ is called a $\fp$-primary submodule of $M$.  An expression of $N$ as an intersection of finitely many primary submodules of $M$ is called a {\it primary decomposition} of $N$ in $M$.
Such a  primary decomposition
 $$N=M_1\cap\dots \cap M_n\quad \textrm {with } M_i\   \textrm{ $\fp_i$-primary in } M\ (1\leq i\leq n)$$
of $N$ in $M$ is said to  be minimal when  $\fp_1,\dots,\fp_n$ are distinct and $\bigcap_{1\leq j\neq i\leq n}M_j\nsubseteq M_i$ for all $1\leq i\leq n$. In this situation, we have $\ass_R(M/N)=\{\fp_1,\dots, \fp_n\}$; see \cite{m} for more details. An $R$-module $M$ is said to be {\it Laskerian} if each proper submodule of $M$ has a primary decomposition in $M$.

 Divaani--Aazar and Mafi in \cite{dm} defined an $R$-module $M$ to be {\it weakly Laskerian}, if $\ass_R(M/N)$ is a finite set for each submodule $N$ of $M$. Therefore Laskerian modules are weakly Laskerian.

On the other hand, Quy in \cite{q} introduced the class of FSF modules. An $R$-module $M$ is called an FSF module if there is a \textbf{F}initely generated submodule $N$ of $M$ such that the \textbf{S}upport of the
quotient module $M/N$ is \textbf{F}inite. It is easy to see that an FSF module is weakly Laskerian. Bahmanpour in \cite{b} proved that the converse statement is also true and so the class of FSF $R$-modules and the class of weakly Laskerian $R$-modules are equal. In this short note, we improve this result by proving  that if $M$ is weakly Laskerian, then $\supp_R(M/N)=\ass_R(M/N)$ for some finitely generated submodule $N$ of $M$ (and so $M$ is an FSF module).

\section{\bf Main Result}

\begin{thm}Let $M$ be an $R$-module. Then the following statements are equivalent:
\begin{enumerate}[\rm(i)]
\item $M$ is a weakly Laskerian $R$-module;
\item there is a finitely generated submodule $N$ of $M$ such that $\supp_R(M/N)=\ass_R(M/N)$ and $\supp_R(M/N)$ is a finite set;
\item $M$ is an FSF $R$-module.
\end{enumerate}
\end{thm}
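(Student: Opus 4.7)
The implication $(\mathrm{ii}) \Rightarrow (\mathrm{iii})$ is immediate from the definitions. For $(\mathrm{iii}) \Rightarrow (\mathrm{i})$, given a finitely generated $N$ with $\supp(M/N)$ finite and any submodule $L \subseteq M$, I would use the exact sequence
\[
0 \to (N+L)/L \to M/L \to M/(N+L) \to 0.
\]
The first term $(N+L)/L \cong N/(N \cap L)$ is a quotient of the finitely generated module $N$, so has finite $\ass$; the third is a quotient of $M/N$, so its support, hence its set of associated primes, lies in the finite set $\supp(M/N)$. This forces $\ass(M/L)$ to be finite.

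The substantive implication is $(\mathrm{i}) \Rightarrow (\mathrm{ii})$. My plan is first to apply Bahmanpour's theorem \cite{b} to obtain a finitely generated submodule $N_0 \subseteq M$ with $T := \supp(M/N_0)$ finite (noting that $\ass(M/N_0)$ is automatically finite by hypothesis). Since $\supp(M/N) \subseteq T$ for every $N \supseteq N_0$, it remains to enlarge $N_0$ so that every prime of the resulting support is associated. To this end, for each $\fp \in T \setminus \ass(M/N_0)$ I would pick an associated prime $\fq_\fp \in \ass(M/N_0)$ with $\fq_\fp \subsetneq \fp$ (such a $\fq_\fp$ exists because every prime in the support contains some associated prime, and if the containment were an equality then $\fp$ would itself be associated), select $m_\fp \in M$ whose image in $M/N_0$ has annihilator exactly $\fq_\fp$, and set
\[
N := N_0 + \sum_{\fp \in T \setminus \ass(M/N_0)} \fp \cdot m_\fp.
\]
This $N$ is finitely generated because $R$ is Noetherian and the sum is finite. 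A direct annihilator computation using $\fq_\fp \subsetneq \fp$ should then show $\ann_{M/N}(\overline{m_\fp}) = \fp$, placing each such $\fp$ that survives in $\supp(M/N)$ into $\ass(M/N)$.

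The main technical obstacle will lie in the simultaneous choice of the lifts $m_\fp$ and in the verification that primes $\fq \in \ass(M/N_0)$ persisting in $\supp(M/N)$ remain associated to $M/N$: if several $m_\fp$ come from the same $\fq$-torsion piece, adding the generators $\fp \cdot m_\fp$ could destroy all the $\fq$-witnesses. My plan to address this is to arrange the lifts so that the cyclic submodules $R\overline{m_\fp} \subseteq M/N_0$ — together with a chosen witness for each surviving $\fq \in \ass(M/N_0)$ — form a direct sum; this simultaneously makes the annihilator computation clean and preserves witnesses for the associated primes. When the available $\fq$-torsion is too small to allow such an arrangement, the same construction should force the offending $\fq$'s to drop out of $\supp(M/N)$ altogether, and the finiteness of both $T$ and $\ass(M/N_0)$ drives the bookkeeping through to the desired equality $\supp(M/N) = \ass(M/N)$.
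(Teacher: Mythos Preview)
Your treatments of $(\mathrm{ii})\Rightarrow(\mathrm{iii})$ and $(\mathrm{iii})\Rightarrow(\mathrm{i})$ match the paper's exactly.

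For $(\mathrm{i})\Rightarrow(\mathrm{ii})$ there are two issues, one strategic and one technical.

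\textbf{Strategic.} You begin by invoking Bahmanpour's theorem to get a finitely generated $N_0$ with $\supp(M/N_0)$ finite. The paper deliberately avoids this: its whole point (cf.\ the closing Remark) is to give a self-contained proof that in particular \emph{reproves} Bahmanpour's result. The paper instead builds an increasing chain $0=M_0\subset M_1\subset\cdots$ with $M_i=M_{i-1}+\fp_i x_i$, where $\fp_i$ is a \emph{maximal} element of $\supp(M/M_{i-1})\setminus\ass(M/M_{i-1})$ and $x_i$ witnesses some $\fq_i\in\ass(M/M_{i-1})$ with $\fq_i\subsetneq\fp_i$. Two short lemmas show $\ass(M_j/M_{j-1})=\{\fq_j\}$ and that the $\fp_i$ are pairwise distinct with $\fp_i=(0:_R x_i+M_j)$ for $i\le j$; hence if the process never stopped, $M_\infty=\bigcup M_i$ would have $\{\fp_1,\fp_2,\dots\}\subseteq\ass(M/M_\infty)$ infinite, contradicting weak Laskerianness. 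So the paper's argument is iterative, one prime at a time, and the ``maximal element'' choice is what makes the annihilator bookkeeping go through.

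\textbf{Technical.} Even granting Bahmanpour, your one-shot construction $N=N_0+\sum_{\fp}\fp\,m_\fp$ does not do what you claim. The assertion that $\ann_{M/N}(\overline{m_\fp})=\fp$ fails once the sums interfere: if, say, $M/N_0\cong R/\fq$ with generator $\overline m$, and $\fq\subsetneq\fp_1\subsetneq\fp_3$, $\fq\subsetneq\fp_2\subsetneq\fp_3$, then you are forced to take $m_{\fp_1}=m_{\fp_2}=m_{\fp_3}=m$, and $N=N_0+\fp_3 m$ gives $\ann(\overline m)=\fp_3$, not $\fp_1$ or $\fp_2$. Your proposed fix---arrange the $R\overline{m_\fp}$ to form a direct sum in $M/N_0$---is exactly what can fail here, and your fallback (``the offending $\fq$'s drop out of the support'') does not address what happens to the $\fp$'s themselves. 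In general a single pass need not yield $\supp=\ass$; what works is to iterate, and the paper's maximality trick is precisely the device that keeps the iteration under control and forces it to terminate.
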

\begin{proof}
The implication (ii)$\Rightarrow$(iii) is clear. To prove the implication (iii)$\Rightarrow$(i), assume that $N$ is a finitely generated submodule  of $M$ such that $\supp_R(M/N)$ is finite. Suppose that $L$ is an arbitrary submodule of $M$. It follows from the exact sequence
$$0\rightarrow(N+L)/L\rightarrow M/L\rightarrow M/(N+L)\rightarrow 0$$
that $\ass_R(M/L)\subseteq\ass_R((M/N)/((N+L)/N))\cup\ass_R(N/(N\cap L))$. Therefore  $\ass_R(M/L)$ is a finite set and consequently $M$ is weakly Laskerian.

(i)$\Rightarrow$(ii). Assume that $M$ is weakly Laskerian and to prove (ii) it is sufficient for us to show that  $\supp_R(M/N)=\ass_R(M/N)$ for some finitely generated submodule $N$ of $M$.

We set  $M_0:=0$ (the zero submodule of $M$). Assume $i\in\N$.  If $\supp_R(M/M_{i-1})=\ass_R(M/M_{i-1})$, then we set $N:=M_{i-1}$ and we end the process.  Otherwise,  we construct $x_i, M_i,  \fq_i, \fp_i, \Sigma_i$ as follows. We set $\Sigma_i:=\supp_R(M/M_{i-1})\setminus\ass_R(M/M_{i-1})$. Since $\Sigma_i$ is a non-empty set of ideals of $R$ and $R$ is Noetherian,  $\Sigma_i$ has a maximal element under inclusion, say  $\fp_i$.
  There exist $\fq_i\in\ass_R(M/M_{i-1})$ and $x_i\in M$ such that $\fq_i=(0:_Rx_i+M_{i-1})\subset\fp_i$ (note that $\fq_i\in\ass_R(M/M_{i-1})$ while $\fp_i\notin\ass_R(M/M_{i-1})$ and so $\fq_i\neq\fp_i$). Now, we  set $M_i:=M_{i-1}+\fp_ix_i$. It is clear that $M_i$ is a finitely generated submodule of $M$. We claim that after a finite number of steps the procedure must stop.  Assume for the sake of contradiction that $\supp_R(M/M_i)\neq\ass_R(M/M_i)$ for all $i$ and $x_i, M_i,  \fq_i, \fp_i, \Sigma_i$ are constructed for all $i\in\N$ as above. Before continuing the proof, we need the following two lemmas.

  \begin{lem}\label{lem1} For each $ j\in \N$, $\ass_R(M_j/M_{j-1})=\{\fq_j\}$.
  \end{lem}
  \begin{proof}
  Since $\fp_j\nsubseteq\fq_j$, we obtain $M_{j-1}\subset M_j$ and so $\ass_R(M_j/M_{j-1})\neq\emptyset$. Now, suppose that  $\fp\in\ass_R(M_j/M_{j-1})$. Then $\fp=(0:_Ra_jx_j+M_{j-1})$ for some $a_j\in\fp_j$. Therefore
   $\fp a_j\subseteq\fq_j$ and so $\fp\subseteq \fq_j$. The reverse inclusion is clear and hence  $\fp=\fq_j$.
  \end{proof}

  \begin{lem} If  $1\leq i\leq j$, then $\fp_i=(0:_Rx_i+M_j)\in\ass_R(M/M_j)$ and $\fp_i\nsubseteq\fp_j$ whenever $1\leq i<j$.
  \end{lem}
  \begin{proof}
  First, we prove that $\fp_j=(0:_Rx_j+M_j)$ for all $j\in\N$. Assume that $r\in(0:_Rx_j+M_j)$. Therefore $rx_j=m_{j-1}+a_jx_j$ for some $m_{j-1}\in M_{j-1},a_j\in\fp_j$. It follows that $r-a_j\in\fq_j$ and so $r\in\fp_j$. This shows that $(0:_Rx_j+M_j)\subseteq\fp_j$. The reverse inclusion is clear and so $\fp_j=(0:_Rx_j+M_j)\in\ass_R(M/M_j)$ for all $1\leq j$.

  Next, we show that    if $1\leq i<j$, then $\fp_i\nsubseteq\fp_j$. Assume for the sake of contradiction that $\fp_i\subseteq\fp_j$ for some $i, j$ with $i<j$. We can assume that $j$ is the least positive integer with the property  that there exists $i$ with $i<j$ such that $\fp_i\subseteq\fp_j$. If $\fp_i=\fp_j$, then since $\fp_i=(0:_Rx_i+M_i)$ we have $\fp_j\in\ass_R(M/M_i)$ while $\fp_j\notin\ass_R(M/M_{j-1})$ by definition. Therefore $i<j-1$ and  there exists $i+1\leq k< j$ such that $\fp_j\in\ass_R(M/M_{k-1})$ and $\fp_j\notin\ass_R(M/M_{k})$. Also, if $\fp_i\subset \fp_j$,  then, by the maximality of $\fp_i$ in $\Sigma_i$,  $\fp_j\in\ass_R(M/M_{i-1})$ while  $\fp_j \notin\ass_R(M/M_{j-1})$ by definition. Therefore there exits $i\leq k <j$ such that $\fp_j\in\ass_R(M/M_{k-1})$ and $\fp_j\notin\ass_R(M/M_k)$. In both cases,
  we deduce from the exact sequence
 $$0\rightarrow M_k/M_{k-1}\rightarrow M/M_{k-1}\rightarrow M/M_k\rightarrow 0$$
 that $\fp_j\in\ass_R(M_k/M_{k-1})$ and so Lemma \ref{lem1} implies that $\fp_j=\fq_k\subset\fp_k$. Since $\fp_i\subseteq\fp_j$, we obtain $\fp_i\subset\fp_k$, which is impossible by the minimality of $j$. Therefore $\fp_i\nsubseteq\fp_j$ for all $1\leq i<j$.

 Finally, we prove by induction on $j$  that $\fp_i=(0:_Rx_i+M_j)$ for all $1\leq i\leq j$. If $j=1$, then we have $\fp_1=(0:_Rx_1+M_1)$. Now assume that $j>1$ and the result has been proved for the smaller values of $j$.  The case $i=j$ is proved at the beginning of the proof of this lemma. So assume  that $i<j$. By the above proof, we have $\fp_i\nsubseteq \fp_j$ and hence there exists $s\in \fp_i\setminus\fp_j$. Suppose that $r\in(0:_Rx_i+M_j)$. Therefore $rx_i=m_{j-1}+a_jx_j$ for some $m_{j-1}\in M_{j-1}, a_j\in\fp_j$. Hence
$srx_i=sm_{j-1}+sa_jx_j$ and so $sa_jx_j\in M_{j-1}$. It follows that $sa_j\in\fq_j$ and so $a_j\in\fq_j$. Therefore $a_jx_j\in M_{j-1}$ and so  $rx_i\in M_{j-1}$. By the inductive hypothesis, we have $(0:_Rx_i+M_{j-1})=\fp_i$ and hence $r\in\fp_i$. Since $r$ is an arbitrary element of $(0:_Rx_i+M_j)$, we obtain $(0:_Rx_i+M_j)\subseteq\fp_i$. The revers inclusion is clear. Therefore  $(0:_Rx_i+M_j)=\fp_i$, as required. This completes the proof of the lemma.
   \end{proof}
 Now, we continue the proof of the theorem.  We set $M_\infty:=\bigcup_{i=1}^\infty M_i$ (note that $M_1\subset M_2\subset M_3\subset \dots$). We show that $(0:_Rx_i+M_\infty)=\fp_i$ for all $i\in\N$. Assume $r\in (0:_Rx_i+M_\infty)$. Therefore $rx_i\in M_j$ for some sufficiently large positive integer $j$. We can assume that $i\leq j$ and so $r\in (0:_Rx_i+M_j)=\fp_i$ by above lemma.  This shows that $(0:_Rx_i+M_\infty)\subseteq\fp_i$. The reverse inclusion is clear and so, in view of the above lemma, $\ass_R(M/M_\infty)$ contains the distinct prime ideals $\fp_1, \fp_2, \dots$, which is impossible because $M$ is weakly Laskerian. Therefore $\supp_R(M/M_i)=\ass_R(M/M_i)$ for some $i$, as required. This proves  the implication (i)$\Rightarrow$(ii) and so the proof of the theorem is completed.
\end{proof}
\begin{rem} In the published version of this paper (see, A. Fathi, A new characterization  of the  weakly Laskerian (FSF) modules, to appear in Commun. Korean Math. Soc.) \cite[Theorem 3.3]{b} is used to prove the above theorem. But here we provide an independent proof.
\end{rem}

  \end{document}